\newtheorem{theorem}{Theorem}
\newcommand{\N}{\mathbb N}
\newcommand{\Z}{\mathbb Z}
\newcommand{\K}{\textbf{\textit{K}}}
\begin{document}
\centerline{{\Large A new characterization of computable functions}}
\vskip 0.6truecm
\noindent
\centerline{{\Large Apoloniusz Tyszka}}
\vskip 0.6truecm
\begin{sloppypar}
\noindent
{\bf Abstract.} ~~~~Let~~~~ \mbox{$E_n=\{x_i=1,~x_i+x_j=x_k,~x_i \cdot x_j=x_k: i,j,k \in \{1,\ldots,n\}\}$}.
\mbox{We prove:} {\tt (1)} there is an algorithm that for every computable function \mbox{$f:\N \to \N$}
returns a positive integer $m(f)$, for which a second algorithm accepts on the input $f$ and
any integer \mbox{$n \geq m(f)$}, and returns a system \mbox{$S \subseteq E_n$}
such that $S$ is consistent over the integers and each integer tuple $(x_1,\ldots,x_n)$ that solves
$S$ satisfies \mbox{$x_1=f(n)$}, {\tt (2)} there is an algorithm that for every computable function \mbox{$f:\N \to \N$}
returns a positive integer $w(f)$, for which a second algorithm accepts on the input $f$ and any integer
\mbox{$n \geq w(f)$}, and returns a system \mbox{$S \subseteq E_n$} such that $S$ is consistent over $\N$
and each tuple \mbox{$(x_1,\ldots,x_n)$} of non-negative integers that \mbox{solves $S$} satisfies \mbox{$x_1=f(n)$}.
\vskip 0.2truecm
\noindent
{\bf Key words and phrases:} computable function, Davis-Putnam-Robinson-Matiyasevich theorem, system of Diophantine equations.
\vskip 0.2truecm
\noindent
{\bf 2010 Mathematics Subject Classification:} 03D20, 11U99.
\vskip 0.6truecm
\par
The Davis-Putnam-Robinson-Matiyasevich theorem states that every recursively enumerable
set \mbox{${\cal M} \subseteq {\N}^n$} has a Diophantine representation, that is
\[
(a_1,\ldots,a_n) \in {\cal M} \Longleftrightarrow
\exists x_1, \ldots, x_m \in \N ~~W(a_1,\ldots,a_n,x_1,\ldots,x_m)=0
\]
for some polynomial $W$ with integer coefficients, see \cite{Matiyasevich} and \cite{Kuijer}.
The polynomial~$W$ can be computed, if we know a Turing machine~$M$
such that, for all \mbox{$(a_1,\ldots,a_n) \in {\N}^n$}, $M$ halts on \mbox{$(a_1,\ldots,a_n)$} if and only if
\mbox{$(a_1,\ldots,a_n) \in {\cal M}$}, see \cite{Matiyasevich} and \cite{Kuijer}.
\vskip 0.2truecm
\par
Before Theorem~\ref{the1}, we need an algebraic lemma together with introductory matter.
Let
\[
E_n=\{x_i=1,~x_i+x_j=x_k,~x_i \cdot x_j=x_k: i,j,k \in \{1,\ldots,n\}\}
\]
Let \mbox{$D(x_1,\ldots,x_p) \in {\Z}[x_1,\ldots,x_p]$}. For the Diophantine equation \mbox{$2 \cdot D(x_1,\ldots,x_p)=0$},
let $M$ denote the maximum of the absolute values of its coefficients.
Let ${\cal T}$ denote the family of all polynomials
$W(x_1,\ldots,x_p) \in {\Z}[x_1,\ldots,x_p]$ whose all coefficients belong to the interval $[-M,M]$
and ${\rm deg}(W,x_i) \leq d_i={\rm deg}(D,x_i)$ for each $i \in \{1,\ldots,p\}$.
Here we consider the degrees of $W(x_1,\ldots,x_p)$ and $D(x_1,\ldots,x_p)$
with respect to the variable~$x_i$. It is easy to check that
\[
{\rm card}({\cal T})=(2M+1)^{\textstyle (d_1+1) \cdot \ldots \cdot (d_p+1)}
\]
\par
We choose any bijection \mbox{$\tau: \{p+1,\ldots,{\rm card}({\cal T})\} \longrightarrow {\cal T} \setminus \{x_1,\ldots,x_p\}$}.
Let ${\cal H}$ denote the family of all equations of the forms:
\vskip 0.2truecm
\noindent
\centerline{$x_i=1$, $x_i+x_j=x_k$, $x_i \cdot x_j=x_k$~~($i,j,k \in \{1,\ldots,{\rm card}({\cal T})\})$}
\vskip 0.2truecm
\noindent
which are polynomial identities in \mbox{${\Z}[x_1,\ldots,x_p]$} if
\[
\forall s \in \{p+1,\ldots,{\rm card}({\cal T})\} ~~x_s=\tau(s)
\]
There is a unique \mbox{$q \in \{p+1,\ldots,{\rm card}({\cal T})\}$} such that \mbox{$\tau(q)=2 \cdot D(x_1,\ldots,x_p)$}.
For each ring $\K$ extending $\Z$ the system ${\cal H}$ implies \mbox{$2 \cdot D(x_1,\ldots,x_p)=x_q$}.
To see this, we observe that there exist pairwise distinct
\mbox{$t_0,\ldots,t_m \in {\cal T}$} such that $m>p$ and
\[
t_0=1~ \wedge ~t_1=x_1~ \wedge ~\ldots~ \wedge ~t_p=x_p~ \wedge ~t_m=2 \cdot D(x_1,\ldots,x_p)~ \wedge
\]
\[
\forall i \in \{p+1,\ldots,m\}~ \exists j,k \in \{0,\ldots,i-1\} ~~(t_j+t_k=t_i \vee t_i+t_k=t_j \vee t_j \cdot t_k=t_i)
\]
For each ring $\K$ extending $\Z$ and for each \mbox{$x_1,\ldots,x_p \in \K$}
there exists a unique tuple \mbox{($x_{p+1},\ldots,x_{{\rm card}({\cal T})}) \in \K^{{\rm card}({\cal T})-p}$}
such that the tuple \mbox{$(x_1,\ldots,x_p,x_{p+1},\ldots,x_{{\rm card}({\cal T})})$}
solves the system \mbox{${\cal H}$}. The sought elements \mbox{$x_{p+1},\ldots,x_{{\rm card}({\cal T})}$}
are given by the formula
\[
\forall s \in \{p+1,\ldots,{\rm card}({\cal T})\} ~~x_s=\tau(s)(x_1,\ldots,x_p)
\]
This proves the following Lemma.
\vskip 0.2truecm
\noindent
{\bf Lemma.} {\em The system ${\cal H} \cup \{x_q+x_q=x_q\}$ can be simply computed.
For each ring $\K$ extending $\Z$, the equation $D(x_1,\ldots,x_p)=0$
is equivalent to the system ${\cal H} \cup \{x_q+x_q=x_q\} \subseteq E_{{\rm card}({\cal T})}$.
Formally, this equivalence can be written as
\[
\forall x_1,\ldots,x_p \in \K ~\Bigl(D(x_1,\ldots,x_p)=0 \Longleftrightarrow
\exists x_{p+1},\ldots,x_{{\rm card}({\cal T})} \in \K
\]
\[
(x_1,\ldots,x_p,x_{p+1},\ldots,x_{{\rm card}({\cal T})}) {\rm ~solves~the~system~}
{\cal H} \cup \{x_q+x_q=x_q\} \Bigr)
\]
For each ring $\K$ extending $\Z$ and for each \mbox{$x_1,\ldots,x_p \in \K$} with
\mbox{$D(x_1,\ldots,x_p)=0$} there exists a unique tuple
\mbox{($x_{p+1},\ldots,x_{{\rm card}({\cal T})}) \in \K^{{\rm card}({\cal T})-p}$} such
that the tuple \mbox{$(x_1,\ldots,x_p,x_{p+1},\ldots,x_{{\rm card}({\cal T})})$} solves the system
\mbox{${\cal H} \cup \{x_q+x_q=x_q\}$}. Hence, for each ring $\K$ extending $\Z$ the equation
\mbox{$D(x_1,\ldots,x_p)=0$} has the same number of solutions as the system \mbox{${\cal H} \cup \{x_q+x_q=x_q\}$}.}
\vskip 0.2truecm
\par
Putting $M=M/2$ we obtain new families ${\cal T}$ and ${\cal H}$.
There is a unique \mbox{$q \in \{1,\ldots,{\rm card}({\cal T})\}$} such that
\[
\Bigl(q \in \{1,\ldots,p\}~ \wedge ~x_q=D(x_1,\ldots,x_p)\Bigr)~ \vee
\]
\[
\Bigl(q \in \{p+1,\ldots,{\rm card}({\cal T})\}~ \wedge ~\tau(q)=D(x_1,\ldots,x_p)\Bigr)
\]
The new system \mbox{${\cal H} \cup \{x_q+x_q=x_q\}$} is equivalent to \mbox{$D(x_1,\ldots,x_p)=0$}
and can be simply computed.
\begin{theorem}\label{the1}
There is an algorithm that for every computable
function \mbox{$f:\N \to \N$} returns a positive integer $m(f)$, for which a second algorithm accepts on the
input $f$ and any integer \mbox{$n \geq m(f)$}, and returns a system \mbox{$S \subseteq E_n$} such that $S$
is consistent over the integers and each integer tuple \mbox{$(x_1,\ldots,x_n)$} that solves $S$ satisfies \mbox{$x_1=f(n)$}.
\end{theorem}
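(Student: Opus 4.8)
The plan is to reduce the statement to the Davis--Putnam--Robinson--Matiyasevich theorem together with the Lemma, the only delicate point being the variable budget. Since $f$ is computable, its graph $\{(a,b)\in\N^2 : b=f(a)\}$ is recursively enumerable, so there is a Turing machine that halts on $(a,b)$ exactly when $f(a)=b$. Feeding this machine to the constructive form of the Davis--Putnam--Robinson--Matiyasevich theorem, I obtain \emph{uniformly in $f$} a polynomial $W$ with integer coefficients and a number $\nu$ of auxiliary variables such that
\[
\forall a,b\in\N ~\Bigl(b=f(a) \Longleftrightarrow \exists y_1,\ldots,y_\nu\in\N ~~W(a,b,y_1,\ldots,y_\nu)=0\Bigr).
\]
The crucial observation is that $W$ depends on $f$ alone, \emph{not} on the eventual number of variables $n$: I will keep $a$ as a genuine variable and never substitute the numeral $n$ for it.

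Next I would pass from $\N$ to $\Z$ by Lagrange's four-square theorem. Introducing the reported variable $x_1$ together with the constraint $x_1=\beta_1^2+\beta_2^2+\beta_3^2+\beta_4^2$ (which forces $x_1\geq 0$), replacing each witness $y_i$ by $\gamma_{i,1}^2+\gamma_{i,2}^2+\gamma_{i,3}^2+\gamma_{i,4}^2$ inside $W$, and collapsing the two resulting conditions into one by the identity $P=Q=0\Leftrightarrow P^2+Q^2=0$, I form a single \emph{fixed} polynomial
\[
D(x_1,a,\beta_1,\ldots,\beta_4,\gamma_{1,1},\ldots,\gamma_{\nu,4})
\]
in $p=6+4\nu$ variables. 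By construction, for every integer assignment with $a=n$ one has $D=0$ if and only if $x_1\geq 0$, all $y_i\geq 0$, and $W(n,x_1,\vec y)=0$; since then $a,x_1,\vec y\in\N$, the displayed equivalence forces $x_1=f(n)$, and conversely $x_1=f(n)$ together with square decompositions of $f(n)$ and of the witnesses yields an integer zero. Thus, over $\Z$ and under the side condition $a=n$, the equation $D=0$ is consistent and pins $x_1=f(n)$. Crucially, $D$, its degrees $d_i$, and its coefficient bound $M$ are all determined by $f$ and are independent of $n$.

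Now I would invoke the Lemma on $D$. It simply computes a system ${\cal H}\cup\{x_q+x_q=x_q\}\subseteq E_{{\rm card}({\cal T})}$ that over $\Z$ is equivalent to $D=0$ with the same solutions; because $M$ and the $d_i$ are fixed, ${\rm card}({\cal T})=C(f)$ is a \emph{constant}. It remains to enforce $a=n$ inside the $E$-basis, which I do by hand: starting from $x_i=1$, repeated doublings $x_k=x_i+x_i$ produce $2^0,2^1,\ldots,2^{\lfloor\log_2 n\rfloor}$, and a chain of additions sums the powers indexed by the binary digits of $n$, the final result being identified with $a$. Over $\Z$ these equations determine their fresh variables uniquely and force $a=n$ at a cost of at most $c\log_2 n+c'$ variables for suitable constants. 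Taking $S$ to be the union of the Lemma system and this addition chain gives $S\subseteq E_N$ with $N=C(f)+c\log_2 n+c'$; every integer solution has $x_1=f(n)$ and at least one exists. Letting the first algorithm compute $C(f)$ and return the least $m(f)$ beyond which $C(f)+c\log_2 n+c'\leq n$ (such a threshold exists since the logarithm is eventually dominated by the identity), the second algorithm outputs exactly this $S\subseteq E_n$.

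I expect the main obstacle to be precisely this variable accounting, that is, the self-referential requirement that a system in $n$ variables name the value $f(n)$, whose magnitude is typically far beyond anything an $n$-term straight-line program over $\{1,+,\cdot\}$ could build directly. The resolution is that $f(n)$ is never built: it is \emph{pinned} as the unique non-negative solution of the small fixed system $D=0$, into which only the \emph{parameter} $n$ is fed, and feeding $n$ costs merely $O(\log n)$ equations while the passage to the $E$-basis via the Lemma is applied to a polynomial of constant size. A secondary point requiring care is the descent from $\N$ to $\Z$: the four-square substitutions, applied both to the output $x_1$ and to every witness, are what guarantee that \emph{every} integer solution — not merely every natural-number solution — carries $x_1=f(n)$.
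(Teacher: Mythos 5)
Your proposal is correct and follows essentially the same route as the paper's proof: the Davis--Putnam--Robinson--Matiyasevich theorem, the four-square substitutions applied to the output and the witnesses, the Lemma to convert the resulting fixed polynomial into a constant-size subsystem of some $E_{C(f)}$, and an addition chain that names the parameter $n$, with the threshold $m(f)$ chosen so the total variable count fits inside $n$. The only (harmless) differences are bookkeeping: you build $n$ in binary with $O(\log n)$ equations and leave the remaining variables unconstrained, whereas the paper counts up to $\left[\frac{n}{2}\right]$ in unary, doubles, adjusts for parity, and pads with equations $z_i=1$ so that the system has exactly $n$ variables.
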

\begin{proof}
By the Davis-Putnam-Robinson-Matiyasevich theorem, the function $f$ has a Diophantine representation.
It means that there is a polynomial $W(x_1,x_2,x_3,\ldots,x_r)$ with integer coefficients
such that for each non-negative integers $x_1$, $x_2$,
\begin{equation}
\tag*{\tt (E1)}
x_1=f(x_2) \Longleftrightarrow \exists x_3, \ldots, x_r \in \N ~~W(x_1,x_2,x_3,\ldots,x_r)=0
\end{equation}
By the equivalence~{\rm (E1)} and Lagrange's four-square theorem, for each integers $x_1$, $x_2$,
the conjunction \mbox{$(x_2 \geq 0) \wedge (x_1=f(x_2))$} holds true if and only if there exist integers
\[
a,b,c,d,\alpha,\beta,\gamma,\delta,x_3,x_{3,1},x_{3,2},x_{3,3},x_{3,4},\ldots,x_r,x_{r,1},x_{r,2},x_{r,3},x_{r,4}
\]
such that
\[
W^2(x_1,x_2,x_3,\ldots,x_r)+\bigl(x_1-a^2-b^2-c^2-d^2\bigr)^2+\bigl(x_2-\alpha^2-\beta^2-\gamma^2-\delta^2\bigr)^2+
\]
\[
\bigl(x_3-x^2_{3,1}-x^2_{3,2}-x^2_{3,3}-x^2_{3,4}\bigr)^2+\ldots+\bigl(x_r-x^2_{r,1}-x^2_{r,2}-x^2_{r,3}-x^2_{r,4}\bigr)^2=0
\]
\newpage
\noindent
By the Lemma, there is an integer \mbox{$s \geq 3$} such that for each integers $x_1$, $x_2$,
\begin{equation}
\tag*{\tt (E2)}
\Bigl(x_2 \geq 0 \wedge x_1=f(x_2)\Bigr) \Longleftrightarrow \exists x_3,\ldots,x_s \in \Z ~~\Psi(x_1,x_2,x_3,\ldots,x_s)
\end{equation}
where the formula $\Psi(x_1,x_2,x_3,\ldots,x_s)$ is algorithmically determined as a conjunction of formulae of the forms:
\vskip 0.2truecm
\centerline{$x_i=1,~~x_i+x_j=x_k,~~x_i \cdot x_j=x_k~~(i,j,k \in \{1,\ldots,s\})$}
\vskip 0.1truecm
\noindent
Let $m(f)=4+2s$, and let $[\cdot]$ denote the integer part function. For each integer $n \geq m(f)$,
\[
n-\left[\frac{n}{2}\right]-2-s \geq m(f)-\left[\frac{m(f)}{2}\right]-2-s \geq m(f)-\frac{m(f)}{2}-2-s=0
\]
Let $S$ denote the following system
\[\left\{
\begin{array}{rcl}
{\rm all~equations~occurring~in~}\Psi(x_1,x_2,x_3,\ldots,x_s) \\
n-\left[\frac{n}{2}\right]-2-s {\rm ~equations~of~the~form~} z_i=1 \\
t_1 &=& 1 \\
t_1+t_1 &=& t_2 \\
t_2+t_1 &=& t_3 \\
&\ldots& \\
t_{\left[\frac{n}{2}\right]-1}+t_1 &=& t_{\left[\frac{n}{2}\right]} \\
t_{\left[\frac{n}{2}\right]}+t_{\left[\frac{n}{2}\right]} &=& w \\
w+y &=& x_2 \\
y+y &=& y {\rm ~(if~}n{\rm ~is~even)} \\
y &=& 1 {\rm ~(if~}n{\rm ~is~odd)}
\end{array}
\right.\]
with $n$ variables. By the equivalence~{\tt (E2)}, the system~$S$ is consistent over $\Z$.
If an integer $n$-tuple $(x_1,x_2,x_3,\ldots,x_s,\ldots,w,y)$ solves~$S$,
then by the equivalence~{\tt (E2)},
\[
x_1=f(x_2)=f(w+y)=f\left(2 \cdot \left[\frac{n}{2}\right]+y\right)=f(n)
\]
\end{proof}
\par
A simpler proof, not using Lagrange's four-square theorem, suffices if we consider solutions in non-negative integers.
\begin{theorem}\label{the2}
There is an algorithm that for every computable function \mbox{$f:\N \to \N$} returns
a positive integer $w(f)$, for which a second algorithm accepts on the input $f$ and any integer \mbox{$n \geq w(f)$},
and returns a system \mbox{$S \subseteq E_n$} such that $S$ is consistent over $\N$ and each tuple \mbox{$(x_1,\ldots,x_n)$}
of non-negative integers that solves $S$ satisfies \mbox{$x_1=f(n)$}.
\end{theorem}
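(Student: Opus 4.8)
The plan is to retrace the proof of Theorem~\ref{the1}, changing only the single step that invoked Lagrange's four-square theorem. As there, the Davis-Putnam-Robinson-Matiyasevich theorem furnishes a polynomial $W(x_1,x_2,x_3,\ldots,x_r)$ with integer coefficients for which the equivalence~{\tt (E1)} holds. Since we now look for solutions in non-negative integers, the quantifiers $\exists x_3,\ldots,x_r \in \N$ in~{\tt (E1)} are already of the right sort: there is no longer any need to express the constraints $x_i \geq 0$ as sums of four squares, and the whole Lagrange apparatus can be dropped.

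What cannot simply be copied is the Lemma, which is stated over rings extending $\Z$ and tacitly uses subtraction: when it builds a polynomial with coefficients in $[-M,M]$ it reads some equations $t_i+t_k=t_j$ as $t_i=t_j-t_k$, and over $\N$ such a step can demand a negative intermediate value. To obtain an $\N$-version I would first split $W$ into its positive and negative parts, $W=A-B$ with $A,B \in \N[x_1,\ldots,x_r]$, so that over $\N$ we have $W=0 \Longleftrightarrow A=B$. Each of $A$ and $B$ has non-negative coefficients, hence can be assembled from $1$ and $x_1,\ldots,x_r$ by equations $x_i=1$, $x_i+x_j=x_k$, $x_i \cdot x_j=x_k$ in which every auxiliary variable takes a non-negative value; writing $u$ for the variable carrying $A$ and $v$ for the one carrying $B$, the identity $A=B$ is then imposed by adjoining $u+z=v$ and $z+z=z$, the second equation forcing $z=0$ over $\N$. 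This yields an algorithmically determined conjunction $\Psi(x_1,x_2,x_3,\ldots,x_s)$ of equations of the three admissible forms and an integer $s \geq 3$ with
\[
x_1=f(x_2) \Longleftrightarrow \exists x_3,\ldots,x_s \in \N ~~\Psi(x_1,x_2,x_3,\ldots,x_s)
\]
for all non-negative integers $x_1,x_2$, which is the $\N$-analogue of~{\tt (E2)} with the clause $x_2 \geq 0$ now redundant.

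From here the construction copies that of Theorem~\ref{the1} with $\Z$ replaced by $\N$. I would put $w(f)=4+2s$ and, for each integer $n \geq w(f)$, form $S$ by adjoining to $\Psi$ exactly $n-\left[\frac{n}{2}\right]-2-s$ equations $z_i=1$ together with the chain $t_1=1$, $t_1+t_1=t_2$, $\ldots$, $t_{\left[\frac{n}{2}\right]-1}+t_1=t_{\left[\frac{n}{2}\right]}$, $t_{\left[\frac{n}{2}\right]}+t_{\left[\frac{n}{2}\right]}=w$, $w+y=x_2$, closed off by $y+y=y$ when $n$ is even and by $y=1$ when $n$ is odd. Every one of these equations behaves over $\N$ exactly as over $\Z$, the variable count is again $n$, and together they force $x_2=2\left[\frac{n}{2}\right]+y=n$. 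Consistency of $S$ over $\N$ follows from the $\N$-analogue of~{\tt (E2)} by taking $x_2=n$ and $x_1=f(n)$, and any non-negative solution satisfies $x_1=f(x_2)=f(n)$.

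The main obstacle is the middle paragraph, namely the passage from the ring-theoretic Lemma to a statement valid over $\N$. Over a ring, addition equations give subtraction for free, which is precisely what collapses over $\N$; the remedy---decomposing $W=A-B$ and enforcing equality through the gadget $u+z=v \wedge z+z=z$---restores the equivalence while keeping every witness non-negative. Once this is in place, no square, and in particular no appeal to Lagrange's theorem, enters the argument, which is exactly the promised simplification.
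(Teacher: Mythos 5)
Your proof is correct and follows essentially the same route as the paper: both drop the Lagrange step and replace the ring-theoretic Lemma by an $\N$-version that rewrites $W=0$ as an equality $A=B$ of polynomials with non-negative coefficients and enforces it through a variable pinned to $0$ by $z+z=z$ (the paper's $x_{p+1}+x_{p+1}=x_{p+1}$ with $x_{p+1}+x_{p+2}=x_{p+3}$). The only difference is cosmetic: you split $W$ into its positive and negative parts, whereas the paper takes $B=\sum(|a(i_1,\ldots,i_p)|+2)\cdot x_1^{i_1}\cdot\ldots\cdot x_p^{i_p}$ and $A=D+B$ so that $0$, $A$, $B$ are guaranteed to be pairwise distinct elements of ${\cal T}\setminus\{x_1,\ldots,x_p\}$, which its bijection $\tau$ requires.
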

\begin{proof}
We omit the construction of $S$ because a similar construction is carried out in the proof of Theorem~\ref{the1}.
As we now consider solutions in $\N$, we need a new algorithm
which transforms any Diophantine equation into an equivalent system of equations of the forms:
\vskip 0.1truecm
\centerline{$x_i=1,~~x_i+x_j=x_k,~~x_i \cdot x_j=x_k$}
\vskip 0.1truecm
\noindent
Let \mbox{$D(x_1,\ldots,x_p) \in {\Z}[x_1,\ldots,x_p] \setminus \{0\}$}, and let
\[
D(x_1,\ldots,x_p)=\sum a(i_1,\ldots,i_p) \cdot x_1^{\textstyle i_1} \cdot \ldots \cdot x_p^{\textstyle i_p}
\]
\vskip 0.1truecm
\noindent
where \mbox{$a(i_1,\ldots,i_p)$} denote non-zero integers. Let
\[
B(x_1,\ldots,x_p)=\sum (|a(i_1,\ldots,i_p)|+2) \cdot x_1^{\textstyle i_1} \cdot \ldots \cdot x_p^{\textstyle i_p}
\]
\par
\[
A(x_1,\ldots,x_p)=D(x_1,\ldots.x_p)+B(x_1,\ldots,x_p)
\]
\vskip 0.4truecm
\noindent
Then, the equation \mbox{$D(x_1,\ldots,x_p)=0$} is equivalent to \mbox{$A(x_1,\ldots,x_p)=$} \mbox{$B(x_1,\ldots,x_p)$}.
The polynomials \mbox{$A(x_1,\ldots,x_p)$} and \mbox{$B(x_1,\ldots,x_p)$} have positive integer coefficients and
\[
A(x_1,\ldots,x_p) \not\in\ \{x_1,\ldots,x_p,0\} \wedge B(x_1,\ldots,x_p) \not\in \{x_1,\ldots,x_p,0, A(x_1,\ldots,x_p)\}
\]
Let $\delta$ denote the maximum of the coefficients
of \mbox{$A(x_1,\ldots,x_p)$} and \mbox{$B(x_1,\ldots,x_p)$}, and let ${\cal T}$ denote the family of all polynomials
$W(x_1,\ldots,x_p) \in {\Z}[x_1,\ldots,x_p]$ whose coefficients belong to the interval $[0,~\delta]$ and
\[
{\rm deg}(W,x_i) \leq {\rm max}\Bigl({\rm deg}(A,x_i),~{\rm deg}(B,x_i)\Bigr)
\]
for each \mbox{ $i \in \{1,\ldots,p\}$}. Here we consider the degrees with respect to the variable~$x_i$.
Let $n$ denote the cardinality of~${\cal T}$. We choose any bijection
\[
\tau: \{p+1,\ldots,n\} \longrightarrow {\cal T} \setminus \{x_1,\ldots,x_p\}
\]
such that $\tau(p+1)=0$, $\tau(p+2)=A(x_1,\ldots,x_p)$, and $\tau(p+3)=B(x_1,\ldots,x_p)$.
Let ${\cal H}$ denote the family of all equations of the form
\[
x_i=1,~~x_i+x_j=x_k,~~x_i \cdot x_j=x_k~~(i,j,k \in \{1,\ldots,n\})
\]
which are polynomial identities in \mbox{${\Z}[x_1,\ldots,x_p]$} if
\[
\forall s \in \{p+1,\ldots,n\} ~~x_s=\tau(s)
\]
Since $\tau(p+1)=0$, the equation $x_{p+1}+x_{p+1}=x_{p+1}$ belongs to ${\cal H}$.
Let
\[
T={\cal H} \cup \{x_{p+1}+x_{p+2}=x_{p+3}\}
\]
The system $T$ can be computed, \mbox{$T \subseteq E_n$}, and
\[
\forall x_1,\ldots,x_p \in \N ~\Bigl(D(x_1,\ldots,x_p)=0 \Longleftrightarrow
\]
\[
\exists x_{p+1},\ldots,x_n \in \N ~(x_1,\ldots,x_p,x_{p+1},\ldots,x_n) {\rm ~solves~} T\Bigr)
\]
For each \mbox{$x_1,\ldots,x_p \in \N$} with
\mbox{$D(x_1,\ldots,x_p)=0$} there exists a unique tuple \mbox{$(x_{p+1},\ldots,x_n) \in {\N}^{n-p}$}
such that the tuple \mbox{$(x_1,\ldots,x_p,x_{p+1},\ldots,x_n)$} solves $T$.
Hence, the equation \mbox{$D(x_1,\ldots,x_p)=0$} has the same number of non-negative integer
solutions as $T$.
\end{proof}
\end{sloppypar}

\noindent
Apoloniusz Tyszka\\
Technical Faculty\\
Hugo Ko\l{}\l{}\k{a}taj University\\
Balicka 116B, 30-149 Krak\'ow, Poland\\
E-mail address: \url{rttyszka@cyf-kr.edu.pl}
\end{document}